\theoremstyle{plain} 
\newtheorem{thm}{Theorem}[section]
\newtheorem{cor}[thm]{Corollary}
\newtheorem{lem}[thm]{Lemma}
\newtheorem{defn}[thm]{Definition}
\theoremstyle{remark} 
\newtheorem{rmk}[thm]{Remark}
\newcommand\independent{\protect\mathpalette{\protect\independent}{\perp}} 
\def\independent#1#2{\mathrel{\rlap{$#1#2$}\mkern2mu{#1#2}}}
\def\phi{\varphi}
\def\ben{\begin{eqnarray*}}
\def\een{\end{eqnarray*}}
\def\be{\begin{eqnarray}}
\def\ee{\end{eqnarray}}
\def\bln{\begin{align*}}
\def\eln{\end{align*}}
\begin{document}


\title{An Exact Upper Bound on the $L^p$ Lebesgue Constant and \\
The $\infty$-R\'enyi Entropy Power Inequality for Integer Valued Random Variables}
\author{
Peng Xu\thanks{Department of Mathematics, Eastern Michigan University. E-mail: {\tt pxu@emich.edu}}, 
Mokshay Madiman\thanks{Department of Mathematical Sciences, University of Delaware.
E-mail: {\tt madiman@udel.edu}}, 
James Melbourne\thanks{Department of Mathematical Sciences, University of Delaware.
E-mail: {\tt jamesm@udel.edu}.
} }
\date{}
\maketitle

\begin{abstract}
In this paper, we proved an exact asymptotically sharp upper bound of the $L^p$ Lebesgue Constant (i.e. the $L^p$ norm of Dirichlet kernel) for $p\ge 2$. As an application, we also verified the implication of a new $\infty$-R\'enyi entropy power inequality for integer valued random variables.
\end{abstract}

\section{Introduction.}
The best constants of important operators of harmonic analysis is always an area of persistent investigation, for example, the norm of the Fourier transform (FT) on locally compact abelian (LCA) groups (some basic facts of Fourier analysis on LCA groups can be found in \cite{Gf14:book, Zg59:book}). In particular, for Euclidean space, the norm of FT from $L^p(\mathbb{R}^d)$ to $L^{p'}(\mathbb{R}^d)$ with $p'$ the H\"older dual index of $p$ and $p\in (1,2]$ on Euclidean space is the content of Hausdorff-Young inequality, and the sharp constant is proven by Beckner in \cite{Bec75}. For some abstract LCA groups, the norm of FT is proven by Gilbert and Rzeszotnik in \cite{GR10} for the case that the group is finite, and by Madiman and Xu in \cite{MX16FN, XM15} for the case that the group is infinite and discrete or compact. 

Moreover, a lot of questions about estimating the $L^p$ norms of the FT of some special functions have been considered. For example, the upper bound of the $L^p$ norm of the FT of uniform probability distribution functions on intervals is proven by K. Ball in \cite{Ball86, Ball89} and by Nazarov and Podkorytov in \cite{NP} with the following sharp result:
\be\label{inq:Ball}
\mbox{Ball's~integral~inequality:~~}\int_{\mathbb{R}}\left|\frac{\sin \pi x}{\pi x} \right|^p dx <\sqrt{\frac{2}{p}}~~\mbox{for}~p\ge 2
\ee
As an application, Ball also derived the sharp constant for cube slicing inequality in \cite{Ball86, Ball89}, which, together with Rogozin's convolution inequality (\cite{Rog87:1}) and a rearrangement argument (\cite{BLL74, WM14}), can also be used to derive the sharp constants for $\infty$-R\'{e}nyi entropy power inequality (the proofs can be found in \cite{BC14, MMX16R, XMM16isita, XMM16isitb}, some basic facts about entropy power inequality can be found in a survey paper \cite{MMX16:1}) of the following form:
\be\label{inq:epireal}
N_\infty(X_1+\cdots +X_n)\ge \frac{1}{2} \sum_{i=1}^nN_\infty(X_i)
\ee
for independent one-dimensional random variables $X_i$, with the notation of $\infty$-R\'enyi entropy power $N_\infty(X):=\|f\|_\infty^{-2}$, where $f$ is the density of $X$. As a further application, Ball's integral inequality \eqref{inq:Ball} also plays a key role in deriving the sharp bounds for marginal densities of product one-dimensional measures (see \cite{LPP15}). 

On the other hand, the ``discrete version'' of Ball's integral inequality, or equivalently the question about an exact upper bound of the $L^p$ norm of the FT of uniform probability mass function supported on the integer interval $\{0,1,\cdots, l-1\}$ (this norm is also called $L^p$ Lebesgue constant) was still open. Note that this FT is precisely the normalized Dirichlet kernel of length $l$ defined by
\ben
D_l(x):=\frac{\sin l\pi x}{l \sin \pi x}\cdot e^{i(l-1)\pi x}
\een
supported on $[-1/2,1/2]$. Before our work, some asymptotic estimates of the $L^p$ Lebesgue constant have been studied by, for example, \cite{AAJRS07, Ash10, Zg59:book}. Specifically, for $p=1$, it is well known that (see \cite{Zg59:book})
\be
\int_{-1/2}^{1/2}|D_l(x)|dx\simeq \frac{4\log l}{\pi^2l}
\ee
For the case that $p>1$, Anderson et. al. in \cite[Lemma 2.1]{AAJRS07} proved an asymptotically sharp estimate:
\begin{align}\label{inq:old}
\int_{-1/2}^{1/2}|D_l(x)|^pdx 
=\frac{\frac{2}{\pi}\int_0^\infty \left|\frac{\sin u}{u}\right|^pdu}{l}+o_p\left(\frac{1}{l}\right)\le  \left(\sqrt{\frac{2}{p}}+o_p(1)\right) \cdot \frac{1}{l}.
\end{align}
This result also gives the connection between $L^p$ Legesgue constant and Ball's integral inequality \eqref{inq:Ball}. However, no exact upper bound can be derived from these results. 

In this paper, we provide the following new result: For $p\ge 2$ and $l\ge 6$, 
\ben
\int_{-1/2}^{1/2}\left|D_l\right|^pdx<\sqrt{\frac{2}{p}}\cdot\sqrt{\frac{1}{l^2-1}}.
\een
it is easy to see that our upper bound coincides with the asymptotic estimation \eqref{inq:old}. 

We would like to mention something about the method in this paper. Motivated by the proof of \cite{Ball86, NP}, the method is basically to compare the distribution functions of $D_l$ and a carefully truncated Gaussian function, and to use a similar argument as in \cite{NP}.

We will organize this paper as follows. 
In section 2, we will provide the proof of our main result (Theorem \ref{thm:DK}). In section 3, we will describe an application of the main result in deriving a new $\infty$-R\'enyi entropy power inequality for integer valued random variables (see Corollary \ref{thm:EPIdiscrGer}).

\section{An exact upper bound on the $L^p$ Lebesgue constant $p\ge 2$.}
\begin{thm}\label{thm:DK}
Let $l\ge 6$ be an integer. Then for $p\ge 2$, the normalized Dirichlet kernel defined by $D_l(x):=\frac{\sin l\pi x}{l \sin \pi x}\cdot e^{i(l-1)\pi x}$ supported on $\left[-\frac{1}{2},\frac{1}{2}\right]$ satisfies the following integral inequality:
\be\label{inq:mailnorm}
\int_{-1/2}^{1/2}\left|D_l(x)\right|^pdx<\sqrt{\frac{2}{p(l^2-1)}}
\ee
\end{thm}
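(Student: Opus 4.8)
The plan is to follow the Nazarov--Podkorytov strategy of comparing distribution functions. Write $|D_l(x)| = \left|\frac{\sin l\pi x}{l\sin\pi x}\right|$, and on $[-1/2,1/2]$ substitute so that we are integrating against a suitable normalized variable; by symmetry it suffices to work on $[0,1/2]$. I would introduce a comparison function, a truncated Gaussian $g(x) = e^{-c l^2 x^2}$ restricted to $|x|\le 1/2$, with the constant $c>0$ chosen so that the ``crossing'' structure is favorable — concretely so that $|D_l|$ and $g$ have exactly one sign change on $(0,1/2]$, with $|D_l|\le g$ near $0$ reversing to $|D_l|\ge g$ (or the reverse, whichever the computation dictates). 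The point of the truncated Gaussian is that $\int_{-1/2}^{1/2} g^p\,dx < \int_{\mathbb{R}} e^{-pcl^2x^2}\,dx = \sqrt{\pi/(pcl^2)}$, and one then picks $c$ so that $\sqrt{\pi/(pc)} \le \sqrt{2/p}\cdot\frac{l}{\sqrt{l^2-1}}$, i.e. roughly $c = \pi^2/2$ corrected by the $l^2/(l^2-1)$ factor; getting this bookkeeping to close for all $l\ge 6$ is where the hypothesis $l\ge 6$ will be spent.

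The core analytic input is the Nazarov--Podkorytov lemma: if two nonnegative functions $F$ (here $|D_l|$) and $G$ (here the truncated Gaussian) satisfy $\int F^{p_0} = \int G^{p_0}$ for some reference exponent $p_0$ (take $p_0 = 2$), and if their super-level set measures satisfy $\mu_F(\lambda) - \mu_G(\lambda)$ changes sign exactly once (from $-$ to $+$) as $\lambda$ decreases, then $\int F^p \le \int G^p$ for all $p \ge p_0$. So the first reduction is to normalize: rescale $G$ to a function $\tilde G$ with $\int_{-1/2}^{1/2}\tilde G^2 = \int_{-1/2}^{1/2}|D_l|^2 = \frac{1}{l}$ (the latter is the elementary Parseval computation for the uniform pmf on $l$ points). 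Then I must verify (i) the single-crossing of the distribution functions and (ii) that the normalization constant absorbed into $\tilde G$ only helps, i.e. $\int \tilde G^p$ is still below the target $\sqrt{2/(p(l^2-1))}$ for $p\ge 2$. Step (ii) should reduce to a one-variable inequality comparing $\frac{1}{l}$ (the true $L^2$ mass) with the $L^2$ mass of the untruncated Gaussian $\sqrt{\pi/(2c)}\cdot\frac{1}{l}$-type expression, and checking the truncation loss is in the right direction.

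The main obstacle, as in Ball's and in Nazarov--Podkorytov's arguments, is establishing the single-crossing property (i). Concretely one needs to control $\Phi(x) := \log|D_l(x)| + cl^2x^2$ and show its derivative vanishes exactly once on $(0,1/2)$, or argue via the structure of $\frac{\sin l\pi x}{l\sin\pi x}$ directly. The function $|D_l|$ is not monotone — it oscillates with zeros at $x = k/l$ — so ``single crossing of the functions'' is false pointwise; this is exactly why one passes to \emph{distribution functions}, where the oscillation is washed out and one instead needs single crossing of $\lambda\mapsto \mu_{|D_l|}(\lambda) - \mu_{\tilde G}(\lambda)$. I expect the proof of this to require: a lower bound on $|D_l|$ of the form $|D_l(x)|\ge$ (something like $\frac{|\sin l\pi x|}{l\pi x}\cdot\frac{\pi x}{\sin\pi x}$, using $\sin\pi x \le \pi x$) to compare with the sinc profile, combined with a convexity/monotonicity statement for the map $x\mapsto \frac{\sin\pi x}{\pi x}$ that makes the truncated sinc problem reduce to the already-known continuous Ball-type inequality \eqref{inq:Ball}. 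A clean route may be to first prove the result with $|D_l|$ replaced by $\left|\frac{\sin l\pi x}{l\pi x}\right|$ on $[-1/2,1/2]$ (which after scaling $u = l\pi x$ is a truncated version of \eqref{inq:Ball} and gives $\sqrt{2/p}\cdot\frac{1}{l}$ up to a truncation gain), and then upgrade to the true Dirichlet kernel by showing the replacement $\frac{1}{l\pi x}\rightsquigarrow \frac{1}{l\sin\pi x}$ costs a factor controlled by $\int_{-1/2}^{1/2}$ of $(\pi x/\sin\pi x)^p$ against its $L^2$ analogue — but care is needed since this factor is $\ge 1$, so it must instead be folded into the choice of $c$ and the $l^2/(l^2-1)$ slack, which again is why $l\ge 6$ is needed rather than, say, $l\ge 2$.
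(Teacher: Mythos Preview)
Your high-level strategy matches the paper's: compare $|D_l|$ with a truncated Gaussian and invoke the Nazarov--Podkorytov distribution-function lemma, anchored at $p_0=2$ via Parseval ($\int|D_l|^2=1/l$). The paper uses the Gaussian $e^{-\pi(l^2-1)x^2/2}$, truncated not in $x$ at $1/2$ but at a specific \emph{height} (roughly $1/(\pi l/2)$), and rather than rescaling to force $\int\tilde G^2=1/l$ it simply checks the one-sided inequality $\int f^2\ge 1/l$ by a Gaussian-tail estimate; these are minor bookkeeping differences from your outline.

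The genuine gap is your plan for the single crossing. Both routes you sketch fail: the ``prove it for sinc, then upgrade'' route goes the wrong way, since $|D_l(x)|\ge|\sin l\pi x/(l\pi x)|$ (as you note), so a sinc bound is a lower bound on $|D_l|$; and the vague appeal to monotonicity of $\sin\pi x/(\pi x)$ does not touch the oscillatory structure of the distribution function $G$. The paper's mechanism, which you are missing entirely, is: (a) show $|D_l|<f$ on $[0,1/l]$ via the Weierstrass product for $\sin$, giving $G(y)<F(y)$ for $y$ above the first side-lobe height $y_1$; (b) check $F(0)<G(0)=1/2$ because the truncated Gaussian has small support when $l\ge 6$; (c) prove $F-G$ is \emph{monotone increasing} on $(0,y_1)$, hence crosses zero exactly once. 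Step (c) is the real work and uses the coarea-type identity $|G'(y)|=\sum_{x:\,g(x)=y}1/|g'(x)|$: one bounds $|g'(x)|$ from above on each bump $[k/l,(k+1)/l]$ by explicit trigonometric manipulation (roughly $|g'|\le l\pi^2/(4k)$ on the $k$-th side lobe and $|g'|\le 2l$ on the main lobe), sums to get $|G'(y)|\gtrsim (1+m^2)/l$ when $y\in(y_{m+1},y_m)$, and compares with the explicit $|F'(y)|^{-1}=y\sqrt{2\pi(l^2-1)\log(1/y)}$. The small-$m$ cases ($m=1,2$) and $l=6$ need sharpened constants and are done by hand; \emph{that} is where $l\ge 6$ is actually spent, not in the Gaussian normalization.
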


\begin{lem}\label{lem:step1}
Let $l$ be a positive integer, for $x\in [0,1/l]$, 
\be\label{inq:step1}
\frac{\sin l\pi x}{l\sin \pi x}< \exp \left(\frac{-\pi (l^2-1)x^2}{2}\right)
\ee
\end{lem}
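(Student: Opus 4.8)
The plan is to reduce the inequality to a statement about a single real variable and then verify it with elementary calculus. Setting $t = \pi x$, so that $t \in [0, \pi/l]$, the claim \eqref{inq:step1} becomes $\frac{\sin lt}{l \sin t} < \exp\!\left(-\tfrac{l^2-1}{2} \cdot \tfrac{t^2}{\pi} \cdot \pi\right)$; more cleanly, after taking logarithms of both sides (legitimate since on $(0,1/l)$ the left side is positive and at most $1$), it suffices to prove
\[
g(x) := \log \sin l\pi x - \log(l \sin \pi x) + \frac{\pi(l^2-1)x^2}{2} < 0 \quad \text{for } x \in (0, 1/l),
\]
with the boundary behavior $g(x) \to 0$ as $x \to 0^+$ and $g(1/l) = -\infty$ (the first term blows down). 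So the natural route is to show $g$ starts at $0$ and to control its derivative.

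First I would compute $g'(x) = \pi l \cot l\pi x - \pi \cot \pi x + \pi(l^2-1)x$. The key sub-step is to establish the pointwise bound $g'(x) < 0$ on the open interval, or at least to show $g'$ is negative near $0$ and changes sign at most in a controlled way. A clean way to handle the cotangent terms is to use the classical expansion $\pi \cot \pi x = \tfrac{1}{x} - \sum_{k \ge 1} \tfrac{2x}{k^2 - x^2}$, which gives
\[
g'(x) = \Big(\frac{1}{x} - \sum_{k\ge 1} \frac{2x}{k^2/l^2 - x^2}\cdot\frac{1}{l}\cdot l\Big) - \Big(\frac{1}{x} - \sum_{k\ge 1}\frac{2x}{k^2-x^2}\Big) + \pi(l^2-1)x,
\]
where the $1/x$ terms cancel; after simplification one is left with a sum of negative terms plus $\pi(l^2-1)x$, and the comparison $\sum_{k\ge1}\big(\tfrac{2x}{k^2-x^2} - \tfrac{2x l^2}{k^2 - l^2 x^2}\big)$ against $\pi(l^2-1)x$ should follow term-by-term or after bounding the tail, using $x < 1/l$. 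Alternatively, and perhaps more simply, I would use the elementary inequalities $\log\frac{\sin\theta}{\theta} \le -\frac{\theta^2}{6}$ (valid on $(0,\pi)$) applied to $\theta = l\pi x$, and $\log\frac{\sin\theta}{\theta} \ge -\frac{\theta^2}{6} - c\theta^4$ or the cleaner $-\log\frac{\sin\pi x}{\pi x} \le \frac{(\pi x)^2}{6}\cdot\frac{1}{1-(\pi x)^2/\text{something}}$ type bound, to write
\[
g(x) = \log\frac{\sin l\pi x}{l\pi x} - \log\frac{\sin \pi x}{\pi x} + \frac{\pi(l^2-1)x^2}{2} \le -\frac{(l\pi x)^2}{6} + \frac{(\pi x)^2}{6}\cdot(1+o(1)) + \frac{\pi(l^2-1)x^2}{2}.
\]
The leading term is $-\tfrac{\pi^2(l^2-1)x^2}{6}$, and since $\pi^2/6 > \pi/2$ this dominates the additive $\tfrac{\pi(l^2-1)x^2}{2}$ term, forcing $g < 0$ once the error in the lower bound for $-\log\frac{\sin\pi x}{\pi x}$ is controlled on $x \le 1/l \le 1/6$.

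The main obstacle I anticipate is making the second comparison fully rigorous: I need a \emph{two-sided} estimate for $\log\frac{\sin\theta}{\theta}$ that is tight enough near $\theta = 0$ (to capture the $-\theta^2/6$ main term with the right constant) yet valid on the whole relevant range, so that the small positive contribution from the $-\log\frac{\sin\pi x}{\pi x}$ term is genuinely outweighed. The cleanest fix is probably to avoid logarithms of the small factor altogether: since $x \le 1/6$, bound $\frac{1}{l\sin\pi x} \le \frac{1}{l\pi x}\cdot\frac{1}{1 - (\pi x)^2/6} \le \frac{1}{l\pi x}(1 + \tfrac{(\pi x)^2}{6} + \cdots)$ crudely, combine with $\frac{\sin l\pi x}{l\pi x} \le 1$ trivially when that already suffices, and otherwise use $\frac{\sin l\pi x}{l\pi x} \le e^{-(l\pi x)^2/6}$; then the whole inequality reduces to checking $e^{-(l\pi x)^2/6}\cdot(\text{mild correction in }\pi x) \le e^{-\pi(l^2-1)x^2/2}$, i.e. to a one-variable inequality in the single quantity $u = l\pi x \in (0,\pi)$ after absorbing the correction, which is then a routine (if slightly delicate) calculus exercise. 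I would present the argument in whichever of these two forms turns out to require the least case analysis.
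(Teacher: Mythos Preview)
Your core observation is exactly right: the whole lemma boils down to the numerical fact $\pi^2/6>\pi/2$, so that the $-\tfrac{\pi^2(l^2-1)x^2}{6}$ coming from the sinc expansion beats the $+\tfrac{\pi(l^2-1)x^2}{2}$ you must absorb. Where your write-up stalls is precisely where you say it does: you need a \emph{lower} bound on $\log\frac{\sin\pi x}{\pi x}$ sharp enough that the correction does not eat the margin $\pi^2/6-\pi/2$. If you push your crude route (say $-\log(1-u)\le u/(1-u)$ applied term by term) you get
\[
g(x)\;\le\;-\frac{\pi^2 l^2 x^2}{6}+\frac{l^2}{l^2-1}\cdot\frac{\pi^2 x^2}{6}+\frac{\pi(l^2-1)x^2}{2},
\]
and this is \emph{not} negative for $l=2$ (numerically $8\pi/3<9$), so the ``routine calculus exercise'' actually fails for small $l$ unless you work harder. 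The cotangent/derivative route has the same issue in disguise.

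The paper sidesteps all of this by never separating numerator and denominator. Writing
\[
\frac{\sin l\pi x}{l\sin\pi x}=\prod_{k\ge1}\frac{1-l^2x^2/k^2}{1-x^2/k^2},
\qquad
e^{-\pi(l^2-1)x^2/2}\ge e^{-\pi^2(l^2-1)x^2/6}=\prod_{k\ge1}e^{-(l^2-1)x^2/k^2},
\]
it suffices to compare \emph{termwise}: with $u=x^2/k^2\in[0,1/l^2]$ one needs
$\frac{1-l^2u}{1-u}\le 1-(l^2-1)u\;(<e^{-(l^2-1)u})$, which after clearing $1-u$ is just $0\le(l^2-1)u^2$. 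This single algebraic identity replaces your two-sided sinc estimate entirely and works uniformly in $l$; I'd recommend rewriting your second approach in this product form.
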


\begin{proof}
We have
\begin{align*}
\frac{\sin l\pi x}{l\sin \pi x} &=\frac{\sin l\pi x}{l \pi x}\cdot \frac{\pi x}{\sin \pi x}
=\frac{\prod_{k=1}^\infty\left(1-\frac{l^2x^2}{k^2}\right)}{\prod_{k=1}^\infty\left(1-\frac{x^2}{k^2}\right)}
=\prod_{k=1}^\infty \frac{1-\frac{l^2x^2}{k^2}}{1-\frac{x^2}{k^2}}
\end{align*}
Note that, for $x\in [0,1/l^2]$,
\ben
\exp \left(\frac{-\pi (l^2-1)x^2}{2}\right)\ge \exp \left(\frac{-\pi^2 (l^2-1)x^2}{6}\right)
=\prod_{k=1}^\infty\exp \left(-(l^2-1)\frac{x^2}{k^2}\right)> \prod_{k=1}^\infty \left(1-(l^2-1)\frac{x^2}{k^2}\right)
\een
Compare the right hand sides of these two expressions, it is sufficient to prove the inequality holds termwise, which is:
\be\label{inq:term}
\frac{1-l^2x}{1-x}\le 1-(l^2-1)x,~\mbox{for~}x\in[0,1/l^2]
\ee
which is clearly true.
\end{proof}

In order to prove Theorem \ref{thm:DK}, we will apply \cite[Lemma on distribution functions]{NP}, we state this lemma as follows.

\begin{lem}\label{lem:NP}
For a non-negative function $f: \mathbb{R}\rightarrow [0,\infty)$, its distribution function $F(y)$, $y>0$ is defined by
\ben
F(y):=\lambda \{x\in\mathbb{R}:~f(x)>y\}
\een 
where $\lambda$ is the Lebesgue measure. Let $f$ and $g$ be any two nonnegative measurable functions on $\mathbb{R}$. Let $F$ and $G$ be their distribution functions. Assume that both $F (y)$ and $G(y)$ are finite for every $y > 0$. Assume also that at some point $y_0$ the difference $F-G$ changes sign from $-$ to $+$. Let $S:=\{x>0:~f^p-g^p\in L^1(\mathbb{R})\}$, then the function
\ben
\phi(p):=\frac{1}{py_0^p}\int_\mathbb{R}(f^p-g^p)d\lambda
\een
is increasing on $p$. In particular, if $\int_\mathbb{R}(f^{p_0}-g^{p_0})d\lambda\ge 0$, then $\int_\mathbb{R}(f^p-g^p)d\lambda\ge 0 $
for each $p > p_0$. The equality may hold only if the functions $F$ and $G$ coincide.
\end{lem}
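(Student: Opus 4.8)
The plan is to establish the monotonicity of $\phi$ via the layer-cake representation of $L^p$ norms in terms of distribution functions, and then to read off the conclusion from the fact that $F-G$ changes sign exactly once, at $y_0$. First I would record the elementary pointwise identity valid for all $a,b\ge 0$ and $p>0$,
\[
a^p-b^p=\int_0^\infty py^{p-1}\big(\mathbf{1}_{\{a>y\}}-\mathbf{1}_{\{b>y\}}\big)\,dy,
\]
which is just $a^p=\int_0^a py^{p-1}\,dy$. The structural point I would stress is that, for each fixed $x$, the integrand $\mathbf{1}_{\{f(x)>y\}}-\mathbf{1}_{\{g(x)>y\}}$ does not change sign in $y$: it is supported on the interval between $g(x)$ and $f(x)$ and is identically $+1$ or $-1$ there. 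Consequently $\int_0^\infty py^{p-1}\big|\mathbf{1}_{\{f(x)>y\}}-\mathbf{1}_{\{g(x)>y\}}\big|\,dy=|f(x)^p-g(x)^p|$.

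Next I would invoke the defining hypothesis $p\in S$, namely $f^p-g^p\in L^1$. Combined with the previous observation and Tonelli's theorem, the double integral over $\mathbb{R}\times(0,\infty)$ of $py^{p-1}\big|\mathbf{1}_{\{f>y\}}-\mathbf{1}_{\{g>y\}}\big|$ equals $\int_{\mathbb{R}}|f^p-g^p|\,d\lambda<\infty$. This finiteness is exactly what licenses Fubini to interchange the order of integration even though $f^p$ and $g^p$ may individually be non-integrable, and using $\int_{\mathbb{R}}\mathbf{1}_{\{f>y\}}\,d\lambda=F(y)$ (finite by assumption) it yields the representation
\[
\int_{\mathbb{R}}(f^p-g^p)\,d\lambda=\int_0^\infty py^{p-1}\big(F(y)-G(y)\big)\,dy .
\]

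Writing $H:=F-G$ and substituting $t=y/y_0$ then gives $\phi(p)=\int_0^\infty t^{p-1}H(ty_0)\,dt$. For exponents $p_1<p_2$ in $S$ I would examine the sign of the integrand of $\phi(p_2)-\phi(p_1)=\int_0^\infty\big(t^{p_2-1}-t^{p_1-1}\big)H(ty_0)\,dt$. The sign-change hypothesis says $H(ty_0)\le 0$ for $t<1$ and $H(ty_0)\ge 0$ for $t>1$, while $t^{p_2-1}-t^{p_1-1}<0$ for $t<1$ and $>0$ for $t>1$, since $a\mapsto t^a$ is decreasing for fixed $t\in(0,1)$ and increasing for fixed $t\in(1,\infty)$. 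The two factors thus flip sign in tandem across $t=1$, so their product is nonnegative throughout, giving $\phi(p_2)\ge\phi(p_1)$; this is the heart of the argument. Equality is immediate, since the integrand is strictly positive wherever $t\ne 1$ and $H(ty_0)\ne 0$, forcing $H\equiv 0$, i.e. $F=G$. Finally the ``in particular'' clause follows because $\phi(p_0)=\tfrac{1}{p_0y_0^{p_0}}\int(f^{p_0}-g^{p_0})\,d\lambda\ge 0$ together with monotonicity gives $\phi(p)\ge 0$, whence $\int(f^p-g^p)\,d\lambda=py_0^p\,\phi(p)\ge 0$ for $p>p_0$.

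I expect the main obstacle to be not the monotonicity computation, which is elementary once the representation is in hand, but the careful justification of the distribution-function representation for the \emph{difference} $f^p-g^p$ in the regime where the individual powers need not be integrable. The resolution is precisely the sign-definiteness (in $y$) of the layer-cake integrand noted above: it converts what looks like a delicate conditionally-convergent subtraction of two possibly infinite integrals into an honest application of Tonelli/Fubini whose hypothesis is exactly the single assumption $f^p-g^p\in L^1$ built into the definition of $S$.
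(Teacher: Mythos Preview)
The paper does not supply its own proof of this lemma; it is quoted verbatim from Nazarov and Podkorytov \cite{NP} and merely invoked in the proof of Theorem~\ref{thm:DK}. Your argument is correct and is precisely the Nazarov--Podkorytov proof: the layer-cake representation $\int(f^p-g^p)\,d\lambda=\int_0^\infty py^{p-1}(F(y)-G(y))\,dy$, justified via the sign-definiteness in $y$ of $\mathbf{1}_{\{f(x)>y\}}-\mathbf{1}_{\{g(x)>y\}}$ and the hypothesis $f^p-g^p\in L^1$, followed by the substitution $t=y/y_0$ and the observation that $(t^{p_2-1}-t^{p_1-1})H(ty_0)\ge 0$ for all $t>0$ under the single-sign-change assumption. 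Your care in justifying Fubini for the difference when the individual $p$-th powers may not be integrable is exactly the point emphasized in the original source.
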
 

\begin{proof}[Proof of Theorem \ref{thm:DK}.]
In order to apply Lemma \ref{lem:NP}, we construct our functions $f$ as follows: 
\begin{itemize}
\item If $l$ is even,
\ben
f(x):= \begin{cases}
\exp \left(\frac{-\pi (l^2-1)x^2}{2}\right)~&\mbox{on}~\left[0,\sqrt{\frac{2\log \pi\left(\frac{l}{2} +\frac{1}{2}\right)}{\pi (l^2-1)}}\right]\\
0~&\mbox{otherwise}
\end{cases}
\een
\item If $l$ is odd,
\ben
f(x):= \begin{cases}
\exp \left(\frac{-\pi (l^2-1)x^2}{2}\right)~&\mbox{on}~\left[0,\sqrt{\frac{2\log \pi\left(\lfloor\frac{l}{2}\rfloor +\frac{3}{2}\right)}{\pi (l^2-1)}}\right]\\
0~&\mbox{otherwise}
\end{cases}
\een
\end{itemize}
Note that $f$ is actually the truncated part of the Gaussian function $\exp \left(\frac{-\pi (l^2-1)x^2}{2}\right)$ with the values larger than $\frac{1}{\pi(l/2+1/2)}$ for even $l$ or larger than  $\frac{1}{\pi(\lfloor l/2\rfloor+3/2)}$ for odd $l$. Now define $g(x)$ as follows:
\ben
g(x):=\begin{cases}
\left|\frac{\sin l\pi x}{l\sin \pi x}\right|~&\mbox{on}~[0,1/2] \\
0~&\mbox{otherwise}
\end{cases}
\een

Note that it suffices to prove that $\int_0^\infty g^p\le \int_0^\infty f^p$ for $p\ge 2$, which implies that, for $p\ge 2$, 
\ben
\int g^p\le \int f^p\le \int_\mathbb{R} \exp \left(\frac{-\pi p (l^2-1)x^2}{2}\right) dx=\sqrt{\frac{2}{p}}\cdot\sqrt{\frac{1}{l^2-1}},
\een
which provides the theorem. Note that for $p_0=2$, by Parseval's identity (note that $g$ is actually the absolute value of the FT of the uniform probability distribution on $\{1,2,\cdots, l\}$), we have $\int g^2=1/l$. On the other hand, we claim that $\int f^2\ge 1/l$. In fact, for $l$ even, 
\begin{align*}
\int f^2=\frac{1}{\sqrt{l^2-1}}-2 \int_{\sqrt{\frac{2\log \pi\left(\frac{l}{2} +\frac{1}{2}\right)}{\pi (l^2-1)}}}^\infty e^{-\pi (l^2-1)x^2} dx 
\end{align*}
So it suffices to prove that
\be\label{inq:mess1}
2 \int_{\sqrt{\frac{2\log \pi\left(\frac{l}{2} +\frac{1}{2}\right)}{\pi (l^2-1)}}}^\infty e^{-\pi (l^2-1)x^2} dx \le \frac{1}{\sqrt{l^2-1}}-\frac{1}{l}
= \frac{1}{l\left(l+\sqrt{l^2-1}\right)\sqrt{l^2-1}}
\ee
In fact, the left hand side of \eqref{inq:mess1} has the following estimation:
\begin{align*}
2 \int_{\sqrt{\frac{2\log \pi\left(\frac{l}{2} +\frac{1}{2}\right)}{\pi (l^2-1)}}}^\infty e^{-\pi (l^2-1)x^2} dx
=&\frac{2}{\sqrt{\pi (l^2-1)}} \int_{\sqrt{2\log\pi\left(\frac{l}{2}+\frac{1}{2}\right)}}^\infty e^{-x^2} dx\\
\le &\frac{2}{\sqrt{\pi (l^2-1)}} \int_{\sqrt{2\log\pi\left(\frac{l}{2}+\frac{1}{2}\right)}}^\infty e^{-x^2} x dx\\
=&\frac{1}{\sqrt{\pi (l^2-1)}}  \int_{2\log\pi(\frac{l}{2}+\frac{1}{2})}^\infty e^{-x}dx\\
=& \frac{4}{\pi^{\frac{5}{2}}(l+1)^2\sqrt{(l^2-1)}}
\end{align*}
Comparing this with the right hand side of \eqref{inq:mess1}, it suffices to prove that
\ben
\frac{8}{\pi^{\frac{5}{2}}(l+1)^2}\le \frac{1}{l\left(\frac{l+\sqrt{l^2-1}}{2}\right)},
\een
which is clearly true by the fact that $l+1\ge l\ge\frac{l+\sqrt{l^2-1}}{2}$ and that $\frac{8}{\pi^{\frac{5}{2}}}<1$. For the case that $l$ is odd,  a similar argument will show that $\int f^2\ge 1/l$.

Now it is enough to show that the corresponding distribution functions $F$ and $G$ satisfy the conditions of Lemma \ref{lem:NP} with $s_0=2$. Observe that both $f$ and $g$ are bounded above by 1 by the fact that $|\sin (nx)|\le n|\sin (x)|$.  So we have 
\ben 
F(y)&=&G(y)=0~\mbox{for}~ y\ge 1,\\
F(0)&=&
\begin{cases}
\sqrt{\frac{2\log \pi\left(\frac{l}{2} +\frac{1}{2}\right)}{\pi (l^2-1)}}~~&\mbox{for}~l~\mbox{even}\\
\sqrt{\frac{2\log \pi\left(\lfloor\frac{l}{2}\rfloor +\frac{3}{2}\right)}{\pi (l^2-1)}} ~~&\mbox{for}~l~\mbox{odd}
\end{cases}\\
&<& 1/2=G(0) ~~\mbox{for~all}~l\ge 6.
\een
So we restrict $y\in (0,1)$. Then it is easy to compute the distribution function of $f$ is:
\begin{itemize}
\item If $l$ is even,
\be\label{eq:Fyeven}
F(y)= 
\begin{cases}
\sqrt{\frac{2\log \pi\left(\frac{l}{2} +\frac{1}{2}\right)}{\pi (l^2-1)}} &~~~y\in \left[0,\frac{1}{\pi(l/2+1/2)}\right)\\
\sqrt{\frac{2\log \frac{1}{y}}{\pi (l^2-1)}} & ~~~y\in \left[\frac{1}{\pi(l/2+1/2)}, 1\right]
\end{cases}
\ee
\item If $l$ is odd,
\be\label{eq:Fyodd}
F(y)= 
\begin{cases}
\sqrt{\frac{2\log \pi\left(\lfloor\frac{l}{2}\rfloor +\frac{3}{2}\right)}{\pi (l^2-1)}}  &~~~y\in \left[0,\frac{1}{\pi(\lfloor l/2\rfloor+3/2)}\right)\\
\sqrt{\frac{2\log \frac{1}{y}}{\pi (l^2-1)}} & ~~~y\in \left[\frac{1}{\pi(\lfloor l/2\rfloor+3/2)}, 1\right]
\end{cases}
\ee
\end{itemize}
Now we will estimate $G$. Note that $g(x)$'s graph is like a series of bumps with decreasing heights (see Figure 1 and 2). Consider $y_m:=\max_{[\frac{m}{l},\frac{m+1}{l}]}g$ for $m=\{1,\cdots, l/2-1\}$ for even $l$ or $m= \{1,\cdots ,\lfloor l/2 \rfloor\}$ for odd $l$ (Note that $y_m$ is the peak of each bump). Clearly $y_m\in [\frac{1}{l\sin \pi (m+1/2)/l}, \frac{1}{l\sin \pi m/l}]$. For $x\in [0,1/l]$, by Lemma \ref{lem:step1}, $g(x)<f(x)$, which means that for $y\in (y_1,1)$, $G(y)<F(y)$. Combining this fact with $F(0)<G(0)$, we claim that $F-G$ must change sign at least once on $[0,1]$.  
\begin{figure}\label{fig:1}
\centering
\includegraphics[width=60mm]{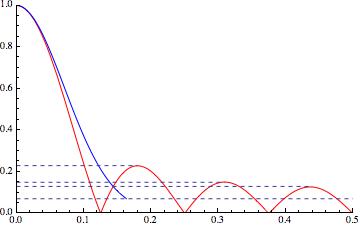}
\caption{Graph of $g(x)$ and $f(x)$ with $l=8$, the red curve is $g(x)$, the blue curve is $f(x)$, and the $y$-values of the dashed horizontal lines are $y_1$, $y_2$, $y_3$, $y_{last}$ from high to low, where $y_{last}$ is the lower bound of positive $f(x)$, see \eqref{eq:deflast}.}
\end{figure}
\begin{figure}\label{fig:2}
\centering
\includegraphics[width=60mm]{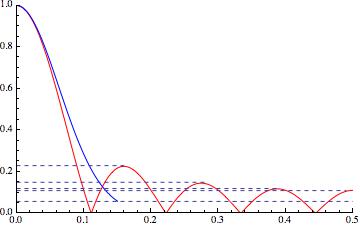}
\caption{Graph of $g(x)$ and $f(x)$ with $l=9$. The $y$-values of the dashed horizontal lines are $y_1$, $y_2$, $y_3$, $y_4$, and $y_{last}$ from high to low}
\end{figure}

To prove that the change of sign of $F-G$ occurs only once, it suffices to prove that $F-G$ is increasing on $(0,y_1)$, which is enough to prove that $|G'(y)|\ge |F'(y)|$ on $(0,y_1)$. Clearly, for each $y\in (0,y_1)$ with $y\neq y_j$, 
\be\label{eq:Gprime}
|G'(y)|=\sum_{x>0:~g(x)=y}\frac{1}{|g'(x)|} 
\ee
when $y\in (y_{m+1},y_m)$, the equation $g(x)=y$ has one root in $(0,1/l)$ and two roots in $(k/l,(k+1)/l)$, $k=1,\cdots, \lfloor l/2\rfloor-1$. In particular, if $l$ is odd and $m=\lfloor l/2\rfloor$, then $g(x)$ has possibly one root in $\left(\frac{\lfloor l/2\rfloor}{l}, \frac{1}{2}\right)$ (see Figure 2 for the case that $l$ is odd). We have, 
\begin{enumerate}
\item If the root $x\in (k/l,(k+1)/l)$ for $k\ge 1$, we claim that
\be\label{inq:mess2}
\left|g'(x)\right|\le \frac{\pi }{\sin\pi x}\cdot \frac{\pi x}{\sin k\pi /l}\le \frac{l\pi^2}{4k}
\ee
In fact, 
\begin{align*}
\left|g'(x)\right|=& \left|\frac{l^2\pi\cos l\pi x\sin\pi x-l\pi \sin l\pi x\cos\pi x}{l^2\sin^2\pi x}\right|\\
=& \left|\frac{\pi }{\sin\pi x}\left(\cos l\pi x-\frac{\cos\pi x\sin l\pi x}{l\sin \pi x}\right)\right|\\
\le &\frac{\pi }{\sin\pi x}\left(1+\frac{\left|\cos\pi x\sin (l\pi x-k\pi)\right|}{l\sin k\pi/l }\right)\\
\le &\frac{\pi }{\sin\pi x}\left(1+\frac{\pi x}{\sin k\pi /l}-\frac{k\pi}{l\sin k\pi/l}\right)\\
\le &\frac{\pi }{\sin\pi x}\cdot \frac{\pi x}{\sin k\pi /l}\le \frac{l\pi^2}{4k}
\end{align*}
where the last step is by the fact that $\sin \pi x\ge 2x$ for $x\in [0,1/2]$. 
\item If the root $x\in (0,1/l)$, we have
\begin{align*}
\left|g'(x)\right|=& \left|\frac{l^2\pi\cos l\pi x\sin\pi x-l\pi \sin l\pi x\cos\pi x}{l^2\sin^2\pi x}\right|\\
= &\frac{l \pi \cos l\pi x\cos\pi x}{\sin^2\pi x}\left|l\tan\pi x-\tan l\pi x\right|
\end{align*}
We claim that, for $x\in (0,1/l)$, one always has
\begin{align*}
\left|g'(x)\right|=&\frac{l \pi \cos l\pi x\cos\pi x}{\sin^2\pi x}\left(\tan l\pi x-l\tan\pi x\right)
\end{align*}
In fact, it is easy to prove that $\tan l\pi x\ge l\tan\pi x$ for $x\in (0,\frac{1}{2l})$. On the other hand, if $x\in (\frac{1}{2l},\frac{1}{l})$, $\tan l\pi x<0$, $l\tan\pi x>0$, but the common factor $\cos l\pi x<0$. By this observation, we have
\begin{align*}
\left|g'(x)\right|=&\frac{l \pi \cos l\pi x\cos\pi x}{\sin^2\pi x}\left(\tan l\pi x-l\tan\pi x\right)\\
=&\frac{\pi\cos\pi x }{l\sin^2\pi x}\left(\sin l\pi x-l\tan\pi x\cos l\pi x\right)\\
\end{align*}
By the fact that $\tan\pi x\ge \pi x$ for $x\in (0,1/l)$, we claim that
\be\label{inq:mess2'}
\left|g'(x)\right|\le \frac{l\pi }{2} \left(\frac{\frac{\pi}{l}}{\sin\frac{\pi}{l}}\right)^2
\ee
In fact, 
\begin{align*}
\left|g'(x)\right|\le &\frac{\pi\cos\pi x }{l\sin^2\pi x}\left(\sin l\pi x-l\pi x\cos l\pi x\right)\\
=& \frac{\pi\cos\pi x }{l\sin^2\pi x}\int_0^{l\pi x}t\sin tdt\\
\le & \frac{\pi }{l\sin^2\pi x}\int_0^{l\pi x}tdt\\
= & \frac{l\pi }{2} \left(\frac{\pi x}{\sin\pi x}\right)^2\\
\le & \frac{l\pi }{2} \left(\frac{\frac{\pi}{l}}{\sin\frac{\pi}{l}}\right)^2\\
\le &\frac{l\pi }{2} \left(\frac{\frac{\pi}{4}}{\sin\frac{\pi}{4}}\right)^2\\
\le & 2l
\end{align*}
for $l\ge 6$. 
\end{enumerate}
Now in order to combine these two cases, we define an extra $y_{last}$ (note that $y_{last}$ is the lower bound of positive $f(x)$, See Figure 1 and 2).
\be\label{eq:deflast}
y_{last}:=
\begin{cases}
\frac{1}{\pi(l/2+1/2)}=:y_{l/2} ~~&\mbox{for}~l~\mbox{even}\\
\frac{1}{\pi(\lfloor l/2\rfloor+3/2)}=:y_{\lceil l/2\rceil} ~~&\mbox{for}~l~\mbox{odd}
\end{cases}
\ee
we have two situations: 
\begin{enumerate}
\item $y\in  \left[0, y_{last}\right]$. For this case, recall \eqref{eq:Fyeven} and \eqref{eq:Fyodd}, $F(y)$ is constant on $ \left[0, y_{last}\right]$, which means that $F'(y)$ is $0$ on this interval. So naturally we have $|G'(y)|\ge |F'(y)|$ on this interval.
\item  $y\in (y_{last},y_1)$. For this case, $y$ must fall into some $(y_{m+1},y_m)$, where $y_{m+1}$ could be $y_{last}$ as in our definition \eqref{eq:deflast}. For every $y\in (y_{m+1},y_m)$, combine \eqref{eq:Gprime}, \eqref{inq:mess2} and \eqref{inq:mess2'},
\be\label{inq:mess3}
|G'(y)|\ge \frac{1}{2l}+2\sum_{k=1}^m\frac{4k}{l\pi^2}-\frac{4m}{l\pi^2}
\ee
where the term with negative sign is to avoid the case that $l$ is odd and $m=\lfloor l/2\rfloor$, where $g(x)$ has only one root in $(\lfloor l/2\rfloor, l/2)$ (see Figure 2). Thus,
\be\label{inq:mess3'}
|G'(y)|\ge \frac{1}{2l}+\frac{4m^2}{l\pi^2}
\ee
On the other hand, we have 
\begin{align*}
\frac{1}{|F'(y)|}=\sqrt{2\pi (l^2-1)\log \frac{1}{y}}\cdot y
\end{align*}
One obtains
\begin{align*}
\left|\frac{G'(y)}{F'(y)}\right|\ge  \left(\frac{1}{2l}+\frac{4m^2}{l\pi^2}\right)y\sqrt{2\pi (l^2-1)\log \frac{1}{y}}
\end{align*}
Note that the function $y\sqrt{\log\frac{1}{y}}$ is increasing on $(0,\frac{1}{\sqrt{e}})$ and decreasing on $(\frac{1}{\sqrt{e}},1)$. Now recall that $y_1\le \frac{1}{l\sin(\pi/l)}\le \frac{1}{4\sin(\pi/4)}=\frac{1}{2\sqrt{2}}<\frac{1}{\sqrt{e}}$, hence $y\sqrt{\log\frac{1}{y}}$ increases on $(0,y_1)$. Moreover, we claim that for $y\in( y_{m+1},y_m)$, one always has $y\ge \frac{1}{\pi (m+\frac{3}{2})}$. In fact, 
\begin{itemize}
\item For the case that $y_{m+1}>y_{last}$, one has $y\ge [\frac{1}{l\sin \pi (m+3/2)/l}\ge \frac{1}{\pi (m+\frac{3}{2})}$. 
\item For the case that $y_{m+1}=y_{last}$, which means that $m=l/2-1$ for $l$ even or $m= \lfloor l/2\rfloor$ for $l$ odd. Thus we surely have $y\ge y_{last}=\frac{1}{\pi (m+\frac{3}{2})}$ by the definition \eqref{eq:deflast}.
\end{itemize}

So we have, for $l\ge 6$,
\begin{align*}
\left|\frac{G'(y)}{F'(y)}\right|
\ge &\left(\frac{2}{\pi}\right)^{5/2}\cdot\frac{\sqrt{l^2-1}}{l}\cdot\frac{\left(m^2+\frac{\pi^2}{8}\right)\sqrt{\log\left(m+\frac{3}{2}\right)\pi}}{m+\frac{3}{2}}\\
\ge & 0.3188\cdot \frac{\left(m^2+\frac{\pi^2}{8}\right)\sqrt{\log\left(m+\frac{3}{2}\right)\pi}}{m+\frac{3}{2}}\\
\end{align*}
which is greater than 1 if $m\ge 3$. 

Now we have only two cases left: $y\in (y_3,y_2)$ or $y\in (y_2, y_1)$. For the case that $y\in (y_3, y_2)$ (i.e. $m=2$), note that if $l\ge 6$, then $g(x)=y$ must have two roots on $(1/l,2/l)$. So we can actually sharpen inequalities \eqref{inq:mess3} and \eqref{inq:mess3'} by:
\begin{align*}
|G'(y)|\ge  \frac{1}{2l}+2\sum_{k=1}^m\frac{4k}{l\pi^2}= \frac{1}{2l}+\frac{4(m^2+m)}{l\pi^2}
\end{align*}
Thus, by repeating the same steps, we obtain
\begin{align*}
\left|\frac{G'(y)}{F'(y)}\right|
\ge & 0.3188\cdot \frac{\left(m^2+m+\frac{\pi^2}{8}\right)\sqrt{\log\left(m+\frac{3}{2}\right)\pi}}{m+\frac{3}{2}}\\
\end{align*}
which is clearly greater than 1 for $m=2$. 

For the case that $y\in (y_2, y_1)$, recall inequalities \eqref{inq:mess2} and \eqref{inq:mess2'} and the fact that $g(x)=y$ has one root in $(0,1/l)$ and two roots in $(1/l,2/l)$, we have
\begin{align*}
|G'(y)|\ge & \frac{2}{l\pi}\left(\frac{\sin\frac{\pi}{l}}{\frac{\pi}{l}}\right)^2+\frac{2\sin\pi x\sin\frac{\pi}{l}}{\pi^2 x}~~~\mbox{note that the}~x~\mbox{in~this~inequality~is~in~}(1/l,2/l)\\
\ge &  \frac{2}{l\pi}\left(\frac{\sin\frac{\pi}{l}}{\frac{\pi}{l}}\right)^2+\frac{\sin\frac{2\pi}{l}\sin\frac{\pi}{l}}{\frac{\pi^2}{l} }~~~\mbox{by~the~fact~that~}\frac{\sin x}{x}~\mbox{is decreasing~for~small}~x\\
= & \frac{1}{l}\left(\frac{\sin\frac{\pi}{l}}{\frac{\pi}{l}}\right)^2\left(\frac{2}{\pi}+2\cos\frac{\pi}{l}\right)
\end{align*}
Thus, recall that $y\ge y_2\ge \frac{1}{\frac{5}{2}\pi}$, and the fact that $y\sqrt{\log\frac{1}{y}}$ increases on $(0,y_1)$, we have
\begin{align*}
\left|\frac{G'(y)}{F'(y)}\right|
\ge &\frac{\frac{1}{l}\left(\frac{\sin\frac{\pi}{l}}{\frac{\pi}{l}}\right)^2\left(\frac{2}{\pi}+2\cos\frac{\pi}{l}\right)}{\frac{5}{2}\pi}\sqrt{2\pi(l^2-1)\log \left(\frac{5}{2}\pi\right)}\\
= &\sqrt{\frac{l^2-1}{l^2}}\left(\frac{\sin\frac{\pi}{l}}{\frac{\pi}{l}}\right)^2\left(\frac{4}{5\pi}+\frac{4}{5}\cos\frac{\pi}{l}\right)\frac{\sqrt{2\pi\log\left(\frac{5}{2}\pi\right)}}{\pi}
\end{align*}
which is greater than 1 if $l\ge 7$. Now for $l=6$, then by the series of inequalities after \eqref{inq:mess2}, we have, for $x\in (1/6,2/6)$, 
\ben
|g'(x)|\le \frac{\pi }{\sin\pi x}\left(1+\cos\frac{\pi}{6}\cdot \left(\frac{\pi x}{\sin \frac{\pi }{6}}-\frac{\pi}{6\sin\frac{ \pi}{6}}\right)\right)
\een
Note that the right hand side is increasing for $x\in (1/6,1/3)$ by computing the derivative. Thus we have
\ben
|g'(x)|\le \frac{\pi }{\sin\pi x}\left(1+\cos\frac{\pi}{6}\cdot \left(\frac{\pi x}{\sin \frac{\pi }{6}}-\frac{\pi}{6\sin\frac{ \pi}{6}}\right)\right)\le 2\pi\left(\frac{1}{\sqrt{3}}+\frac{\pi}{6}\right)
\een
Now by \eqref{inq:mess2'}, we have
\ben
|G'(y)|\ge & \frac{1}{3\pi}\left(\frac{\sin\frac{\pi}{6}}{\frac{\pi}{6}}\right)^2+\frac{1}{\pi\left(\frac{1}{\sqrt{3}}+\frac{\pi}{6}\right)}
\een
Thus, 
\begin{align*}
\left|\frac{G'(y)}{F'(y)}\right|
\ge &\frac{\frac{1}{3\pi}\left(\frac{\sin\frac{\pi}{6}}{\frac{\pi}{6}}\right)^2+\frac{1}{\pi\left(\frac{1}{\sqrt{3}}+\frac{\pi}{6}\right)}}{\frac{5}{2}\pi}\sqrt{2\pi(6^2-1)\log \left(\frac{5}{2}\pi\right)}\approx 1.04598>1
\end{align*}
\end{enumerate}



Now, by applying Lemma \ref{lem:NP}, we have that for any $p\ge 2$, 
\ben
\int_{-1/2}^{1/2} |g(x)|^pdx\le \int_{-1/3}^{1/3} f(x)^pdx<\int_\mathbb{R}\exp \left(\frac{-p\pi (l^2-1)x^2}{2}\right)dx=\sqrt{\frac{2}{p(l^2-1)}}
\een
which provides the theorem.
\end{proof}

\section{An $\infty$-R\'enyi entropy power inequality ($\infty$-EPI) for integer-valued random viables}
\label{sec:disc}
Let us firstly introduce some notations.  Let $X$ be an integer valued random variable with probability mass function $f$, denote by $M(X)=M(f):=\|f\|_\infty$. 

\begin{defn}
Let $X$ be an integer valued random variable with probability mass function $f$. Define the $\infty$-R\'enyi entropy $H_\infty(X)$ by
\ben
H_\infty(X)=H_\infty(f):=-\log \|f\|_\infty=-\log M(f).
\een
Define the $\infty$-R\'enyi entropy power by
\ben
N_\infty(X)=N_\infty(f):=e^{2H_\infty(f)}=M(f)^{-2}
\een
\end{defn}

We would like to introduce our motivation for this section. In \cite{MMX16R, XMM16isitb}, we derived a discrete version of Rogozin's convolution inequality (the continuous Euclidean case can be found in \cite{MMX16R, Rog87:1}). We provide the result as follows.

\begin{thm}[\cite{MMX16R, Rog87:1}]\label{cor:RgzItg}
Let $X_1$, $\cdots$, $X_n$ be independent integer valued random variables with $M(X_i)\in \left( \frac{1}{l_i+1}, \frac{1}{l_i} \right]$ for some positive integer $l_i$, then 
\be\label{inq:RgzItg}
M(X_1+\cdots+X_n)\le M(U_1+\cdots+U_n),
\ee
where $U_i$'s are independent integer valued random variables uniformly supported on $\{1,2,\cdots, l_i\}$.
\end{thm}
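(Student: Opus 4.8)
The plan is to run, on the integer lattice, the rearrangement strategy behind the continuous Rogozin inequality. On $\mathbb{Z}$ there is no good ``symmetric decreasing rearrangement'' of a general mass function, so instead I would reduce to extreme points and then to a purely combinatorial statement about intervals. Since $M(\cdot)$ of a sum is translation invariant, the precise placement of the $U_i$ never matters.

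\textbf{Step 1: reduction to uniform laws on finite sets.} An $\ell^1$-approximation lets one assume each $X_i$ is finitely supported (redistribute the tail mass onto a large window; a window of size $\ge l_i$ has room for it without violating $\|f\|_\infty\le 1/l_i$). On a fixed finite support, the probability mass functions with sup-norm $\le 1/l_i$ form a polytope whose extreme points are exactly the uniform distributions $u_A$ on $l_i$-element sets $A\subseteq\mathbb{Z}$ — if two coordinates of $f$ were strictly between $0$ and $1/l_i$ one could move mass between them, and a single such coordinate is impossible since $1-k/l_i\notin(0,1/l_i)$ for every integer $k$. Writing $f_{X_i}=\sum_A c_{i,A}u_A$ and expanding $f_{X_1}*\cdots*f_{X_n}=\sum_{A_1,\dots,A_n}\big(\prod_i c_{i,A_i}\big)\,u_{A_1}*\cdots*u_{A_n}$, the triangle inequality for $\|\cdot\|_\infty$ gives $M(X_1+\cdots+X_n)\le\max\{\,\|u_{A_1}*\cdots*u_{A_n}\|_\infty:\ |A_i|=l_i\,\}$. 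As $\|u_{A_1}*\cdots*u_{A_n}\|_\infty=\tfrac{1}{l_1\cdots l_n}\max_m r(A_1,\dots,A_n;m)$, where $r(A_1,\dots,A_n;m):=\#\{(a_i)\in\prod_iA_i:\ \sum_i a_i=m\}$, and $M(U_1+\cdots+U_n)=\tfrac{1}{l_1\cdots l_n}\max_m r(I_1,\dots,I_n;m)$ for intervals $I_i$ of lengths $l_i$, the theorem reduces to Step 2.

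\textbf{Step 2 (the core): intervals are additively the most concentrated.} I claim that for finite $A_i\subseteq\mathbb{Z}$ with $|A_i|=l_i$, $\ \max_m r(A_1,\dots,A_n;m)\le\max_m r(I_1,\dots,I_n;m)$. The idea is induction on $n$, replacing the $A_i$ by intervals and exploiting that $\rho:=r(I_1,\dots,I_{n-1};\cdot)$ — the coefficient sequence of $\prod_{i<n}(1+z+\cdots+z^{l_i-1})$ — is symmetric and unimodal (a product of symmetric unimodal polynomials with nonnegative coefficients is symmetric unimodal), so its $l_n$-term window sums are largest on a block of $l_n$ consecutive integers. Once $A_1,\dots,A_{n-1}$ are intervals, $\max_m r(I_1,\dots,I_{n-1},A_n;m)=\max_m\sum_{a\in A_n}\rho(m-a)$ is at most the sum of the $l_n$ largest values of $\rho$, which by unimodality equals $\max_m\sum_{a\in I_n}\rho(m-a)=\max_m r(I_1,\dots,I_n;m)$; the base case $n=1$ is trivial. \emph{The main obstacle} is that one is not free to replace the sets in an arbitrary order: replacing a single $A_i$ by an interval can strictly decrease the joint maximum when the others are not yet intervals (e.g.\ $A_1=A_2=\{0,2\}$, where $\max_m r=2$ but replacing $A_1$ by $\{0,1\}$ drops it to $1$, while replacing both gives $2$ again). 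Hence the induction must process ``all coordinates but one'' first, so the inductive hypothesis has to be strengthened from a statement about the maximum to one about enough window sums of $r(A_1,\dots,A_{n-1};\cdot)$ to survive one further convolution with $\mathbf{1}_{A_n}$. This is precisely a discrete Brascamp–Lieb–Luttinger rearrangement inequality, and making it airtight on $\mathbb{Z}$ — including the parity bookkeeping for even-length intervals, which cannot be centred at an integer — is where the real work lies.

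\textbf{Remarks.} For $n\le 2$ everything is immediate: $M(X_1+X_2)\le\min\{M(X_1),M(X_2)\}\le 1/\max\{l_1,l_2\}=M(U_1+U_2)$, so, just as in the Euclidean case, all the content is in $n\ge 3$. One should not expect to obtain the result from the continuous Rogozin inequality directly: smoothing the integer laws to densities of the same $L^\infty$ norm makes the peak of the resulting $n$-fold convolution at least $\|w_n\|_\infty\,M(X_1+\cdots+X_n)$, and since the peak $\|w_n\|_\infty$ of the Irwin–Hall density is $<1$ for $n\ge 3$, this loses exactly the factor one would need.
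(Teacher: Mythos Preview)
The paper does not prove this theorem: it is stated with attribution to \cite{MMX16R, Rog87:1} and used as a black box in the proof of Corollary~\ref{thm:EPIdiscrGer}. So there is no in-paper argument to compare your proposal against; the actual proof lives in the cited preprint on Rogozin's inequality for locally compact groups.

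That said, your outline is the natural one and almost certainly close in spirit to what \cite{MMX16R} does: a Krein--Milman reduction to uniforms on $l_i$-point sets (your Step~1 is correct, including the characterisation of the extreme points), followed by a rearrangement argument showing intervals are extremal. You have correctly located the only nontrivial step and, crucially, you have correctly diagnosed why the naive one-coordinate-at-a-time induction fails (your $\{0,2\}$ example is exactly the right obstruction). What remains is genuinely a gap: you assert that the fix is a discrete Brascamp--Lieb--Luttinger inequality, but you neither state the precise strengthened induction hypothesis nor prove it. The parity issue you flag for even $l_i$ is real but minor; the substantive missing piece is the majorisation statement ``the decreasing rearrangement of $r(A_1,\dots,A_{n-1};\cdot)$ is dominated, in the sense of partial sums, by that of $r(I_1,\dots,I_{n-1};\cdot)$'', which is what one actually needs to push the last convolution through. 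Until that is written down and verified, Step~2 is a plan rather than a proof.
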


This result enables us to reduce the estimation of $M(X_1+\cdots+X_n)$ to a discrete cube slicing problem. In particular, if $l_i$'s are the same (i.e. $M(X_i)$'s are not far from each other), then the corresponding $U_i$'s in \eqref{inq:RgzItg} are i.i.d random variables uniformly distributed on $\{1,2,\cdots, l_i\}$. For this special case, a direct result by Mattner and Roos in \cite[Theorem]{MR08} (which proved a sharp upper bound of $M(f^{*n})$ for $f$ uniform probability mass function on a discrete interval) can be applied, which yields the following partial result.

\begin{thm}[\cite{MMX16R, Rog87:1}]\label{thm:EPIdiscr}
For independent integer valued random variables $X_1$, $\cdots$, $X_n$ with $M(X_i)\in \left( \frac{1}{l+1}, \frac{1}{l} \right]$ for some fixed integer $l\ge 2$, 
\be\label{inq:EPIdiscr}
N_\infty\left(\sum_{i=1}^nX_i\right)\ge \frac{\pi}{6} \frac{l^2-1}{(l+1)^2}\sum_{i=1}^nN_\infty(X_i).
\ee
In particular, if all $M(X_i)=l$, 
\be\label{inq:EPIdiscr2}
N_\infty\left(\sum_{i=1}^nX_i\right)\ge \frac{\pi}{6} \frac{l^2-1}{l^2}\sum_{i=1}^nN_\infty(X_i).
\ee
\end{thm}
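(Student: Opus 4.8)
The plan is to derive \eqref{inq:EPIdiscr} by combining the discrete Rogozin-type inequality of Theorem \ref{cor:RgzItg} with a sharp estimate for the largest atom of an $n$-fold convolution of a discrete uniform law, and then to translate everything into the language of $N_\infty=M^{-2}$. Since each $M(X_i)$ lies in $\left(\tfrac1{l+1},\tfrac1l\right]$ and the intervals $\left(\tfrac1{k+1},\tfrac1k\right]$ partition $(0,1]$, the integer $l_i$ attached to $X_i$ in Theorem \ref{cor:RgzItg} equals $l$ for every $i$; hence that theorem yields
\[
M\!\Big(\sum_{i=1}^n X_i\Big)\;\le\; M\!\Big(\sum_{i=1}^n U_i\Big),
\]
where $U_1,\dots,U_n$ are i.i.d.\ uniform on $\{1,\dots,l\}$. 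The problem is thereby reduced to bounding $\big\|u^{*n}\big\|_\infty$, where $u$ denotes the uniform pmf on $\{1,\dots,l\}$.

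For this bound I would invoke the sharp Mattner--Roos estimate of \cite{MR08}: since $\Var(U_i)=\tfrac{l^2-1}{12}$, the local central limit theorem suggests the value
\[
\big\|u^{*n}\big\|_\infty\;\le\;\sqrt{\frac{6}{\pi\,n\,(l^2-1)}},
\]
and \cite{MR08} establishes this as a genuine upper bound for all $n$. There is also a self-contained alternative that stays within the present paper: the Fourier transform of $u$ on $\mathbb{Z}$ has modulus $|D_l(\cdot)|$, so $\big\|u^{*n}\big\|_\infty\le\int_{-1/2}^{1/2}|D_l(x)|^n\,dx$, and for $n\ge 2$, $l\ge 6$ Theorem \ref{thm:DK} bounds this by $\sqrt{\tfrac{2}{n(l^2-1)}}$. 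Because the latter is matched to Ball's constant $\sqrt{2/p}$, it produces the prefactor $\tfrac12$ rather than $\tfrac{\pi}{6}$ in the final inequality; this weaker route is the one relevant to the subsequent generalization, but to get the stated constant $\tfrac{\pi}{6}$ one wants the Mattner--Roos bound (which also covers $n=1$ and $l\in\{2,3,4,5\}$, where the Fourier route via Theorem \ref{thm:DK} is unavailable).

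It then remains to assemble the pieces, which is pure bookkeeping. From the reduction and the Mattner--Roos bound,
\[
N_\infty\!\Big(\sum_{i=1}^n X_i\Big)=M\!\Big(\sum_{i=1}^n X_i\Big)^{-2}\;\ge\;\frac{\pi\,n\,(l^2-1)}{6}.
\]
On the other hand $M(X_i)>\tfrac1{l+1}$ gives $N_\infty(X_i)=M(X_i)^{-2}<(l+1)^2$, hence $\sum_{i=1}^n N_\infty(X_i)<n(l+1)^2$; dividing the two displays yields exactly \eqref{inq:EPIdiscr}. For the special case $M(X_i)=\tfrac1l$ one has $N_\infty(X_i)=l^2$ for each $i$, so $\sum_i N_\infty(X_i)=nl^2$, and the same computation gives \eqref{inq:EPIdiscr2}.

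\textbf{Main obstacle.} Everything except the bound on $\big\|u^{*n}\big\|_\infty$ is elementary. The real content — and the only place where the constant $\tfrac{\pi}{6}$, as opposed to the $\tfrac12$ coming from Theorem \ref{thm:DK}/Ball, enters — is the sharp Mattner--Roos estimate, which I would use as a black box; reproving it from scratch would amount to a quantitative local-limit analysis of convolution powers of the discrete uniform law.
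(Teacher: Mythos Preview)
Your proposal is correct and is precisely the approach the paper itself sketches: apply the discrete Rogozin inequality (Theorem~\ref{cor:RgzItg}) to reduce to i.i.d.\ uniforms on $\{1,\dots,l\}$, then invoke the Mattner--Roos bound $\|u^{*n}\|_\infty\le\sqrt{6/(\pi n(l^2-1))}$, and finish with the bookkeeping you wrote out. The paper does not give a self-contained proof of this theorem either; it states it as a consequence of exactly these two ingredients, so your identification of the Mattner--Roos estimate as the black box carrying the constant $\pi/6$ is on the mark.
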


\begin{rmk}
The constants in Theorem \ref{thm:EPIdiscr} are asymptotically sharp as $n\rightarrow\infty$ and $l\rightarrow\infty$. In fact, as $l$ large enough, then the constant $\approx \frac{\pi}{6}$, which is the optimal constant by local central limit theorem. 
\end{rmk}

However, for the case that $M(X_i)$'s are far from each other, the argument of Mattner and Roos fails to apply. We will have to use our main result \eqref{inq:mailnorm}. We state this new $\infty$-EPI as follows. 

\begin{cor}\label{thm:EPIdiscrGer}
For independent integer valued random variables $X_1$, $\cdots$, $X_n$ with $M(X_i)\in \left( \frac{1}{l_i+1}, \frac{1}{l_i} \right]$ for some integers $l_i$, denote $l_{min}:=\min_i l_i$ and $l_{max}:=\max_i l_i$, and assume that $l_{min}\ge 6$, then the following $\infty$-EPI holds
\be\label{inq:maingel}
N_\infty\left(\sum_{i=1}^nX_i\right)\ge \frac{1}{2}\cdot \frac{l_{min}-1}{l_{min}+1} \sum_{i=1}^nN_\infty(X_i)\ge \frac{5}{14}\sum_{i=1}^nN_\infty(X_i)
\ee
In particular, if $M(X_i)=1/l_i$, 
\be\label{inq:maingelhit}
N_\infty\left(\sum_{i=1}^nX_i\right)\ge\frac{1}{2}\cdot \frac{l_{min}^2-1}{l_{min}^2} \sum_{i=1}^n N_\infty(X_i)\ge \frac{35}{72} \sum_{i=1}^n N_\infty(X_i)
\ee
\end{cor}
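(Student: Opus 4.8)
The plan is to reduce the general $\infty$-EPI to the i.i.d.\ uniform case via Theorem \ref{cor:RgzItg}, and then estimate the sup-norm of the convolution of uniform mass functions through Parseval and the Hausdorff--Young-type bound furnished by Theorem \ref{thm:DK}. First I would apply Theorem \ref{cor:RgzItg}: since $M(X_i)\in(\frac{1}{l_i+1},\frac{1}{l_i}]$, we have $M(\sum_i X_i)\le M(\sum_i U_i)$ where each $U_i$ is uniform on $\{1,\dots,l_i\}$. Next, note that the characteristic function (discrete Fourier transform) of $U_i$ is exactly $|D_{l_i}(x)|$ on $[-\tfrac12,\tfrac12]$, up to a unimodular factor, so by the inversion/Parseval formula on the torus,
\[
M\Bigl(\sum_{i=1}^n U_i\Bigr)=\Bigl\|\,\prod_{i=1}^n D_{l_i}\,\Bigr\|_{L^1[-1/2,1/2]}
\le \int_{-1/2}^{1/2}\prod_{i=1}^n\bigl|D_{l_i}(x)\bigr|\,dx.
\]

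The second step is to bound this integral. Writing $\sum_i N_\infty(U_i)=\sum_i (1/M(U_i))^2=\sum_i l_i^2$ in the case $M(X_i)=1/l_i$ (and $\ge \sum_i l_i^2$ in general, since $M(X_i)\le 1/l_i$ forces $N_\infty(X_i)\ge l_i^2$), it suffices to show $M(\sum_i U_i)^2\le 2\,\frac{l_{\min}+1}{l_{\min}-1}\cdot\frac{1}{\sum_i l_i^2}$, respectively $\le 2\,\frac{l_{\min}^2}{l_{\min}^2-1}\cdot\frac{1}{\sum_i l_i^2}$. To estimate $\int\prod_i|D_{l_i}|$, I would use H\"older's inequality to pull out one factor in $L^\infty$-free form and the rest via the $L^p$ bound: for instance, bound $n-1$ of the factors by their $L^{p}$ norms for a suitable exponent and one factor trivially, or more symmetrically use the generalized H\"older inequality $\int\prod_i|D_{l_i}|\le\prod_i\|D_{l_i}\|_{L^{q_i}}$ with $\sum 1/q_i=1$. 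Choosing $q_i=n$ for all $i$ and applying Theorem \ref{thm:DK} with $p=n$ (valid once $n\ge2$; the case $n=1$ is trivial since a single $U_i$ has $M=1/l_i$) gives
\[
\int_{-1/2}^{1/2}\prod_{i=1}^n|D_{l_i}|\,dx\le\prod_{i=1}^n\|D_{l_i}\|_{L^n}\le\prod_{i=1}^n\left(\frac{2}{n(l_i^2-1)}\right)^{1/(2n)}=\left(\frac{2}{n}\right)^{1/2}\prod_{i=1}^n\bigl(l_i^2-1\bigr)^{-1/(2n)}.
\]
Squaring and invoking the AM--GM inequality $\bigl(\prod_i (l_i^2-1)\bigr)^{1/n}\ge$ (something comparable to) $\tfrac1n\sum_i(l_i^2-1)$ — or rather its reciprocal form — converts the geometric mean into the arithmetic mean $\sum_i l_i^2$ that appears on the right side of the EPI, at the cost of the correction factor involving $l_{\min}$; specifically $\prod_i(l_i^2-1)^{1/n}\ge \frac{l_{\min}^2-1}{l_{\min}^2}\cdot\bigl(\prod_i l_i^2\bigr)^{1/n}\ge\frac{l_{\min}^2-1}{l_{\min}^2}\cdot\frac{1}{n}\sum_i l_i^2$ would give the clean constant, though one must check the inequality direction carefully. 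Finally, the numerical bounds $\frac{l_{\min}-1}{l_{\min}+1}\ge\frac{5}{7}$ and $\frac{l_{\min}^2-1}{l_{\min}^2}\ge\frac{35}{36}$ for $l_{\min}\ge6$ yield the stated $\frac{5}{14}$ and $\frac{35}{72}$.

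The main obstacle I anticipate is the interchange between the $\frac1n$-exponent geometric-mean bound and the arithmetic mean $\sum_i l_i^2$: H\"older with equal exponents naturally produces $\bigl(\prod_i(l_i^2-1)\bigr)^{1/n}$, and one needs $\bigl(\prod_i l_i^2\bigr)^{1/n}\ge\frac1n\sum_i l_i^2$ — but that inequality goes the \emph{wrong} way (AM--GM gives $\le$). So the naive equal-exponent H\"older is too lossy when the $l_i$ are very unequal. The fix is to instead bound $n-1$ of the Dirichlet kernels pointwise by $1$ on a small neighborhood of $0$ and by their decay elsewhere, exactly mirroring the Nazarov--Podkorytov distribution-function comparison already used in the proof of Theorem \ref{thm:DK}; alternatively, bound $\prod_i|D_{l_i}|\le|D_{l_{\min}}|\cdot\prod_{i\ne i_{\min}}\mathbf{1}$ only where that is favorable. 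More robustly, one can control the sup-norm directly: $M(\sum_i U_i)\le M(U_{i_{\min}})\cdot\prod_{i\ne i_{\min}}\|\widehat{U_i}\|_\infty^{?}$ is false, so the honest route is to keep \emph{one} factor $|D_{l_{j}}|$ where $l_j=l_{\max}$ inside an $L^2$ estimate (giving the $1/l_{\max}$-type decay that feeds $\sum l_i^2$) and absorb the remaining $n-1$ factors in $L^\infty[0,1/2]\le 1$ near the origin together with Theorem \ref{thm:DK}'s tail control away from it. Working out which grouping yields exactly the factor $\frac{l_{\min}^2-1}{l_{\min}^2}$ rather than a worse $l_{\max}$-dependent constant is the delicate bookkeeping step; I expect it follows the pattern "$L^2$ for the convolution identity, $L^p$ with $p=2$-to-$\infty$ range for the rest," with $l_{\min}\ge6$ exactly the threshold that makes Theorem \ref{thm:DK} applicable to every factor.
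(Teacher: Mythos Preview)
Your reduction via Theorem~\ref{cor:RgzItg} and the Fourier representation $M(\sum_i U_i)\le\int_{-1/2}^{1/2}\prod_i|D_{l_i}|$ are exactly right, and you correctly diagnose that equal H\"older exponents $q_i=n$ fail because the resulting geometric mean $(\prod_i l_i^2)^{1/n}$ is on the wrong side of the arithmetic mean $\tfrac1n\sum_i l_i^2$. However, none of the repair attempts you sketch (bounding all but one factor by $1$, mixing $L^2$ and $L^\infty$, etc.) will recover the stated constant, and the proposal does not arrive at an actual argument.

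The paper fills the gap with two ideas you are missing. First, in the generalized H\"older inequality one should take the \emph{weighted} exponents
\[
p_i\;=\;\frac{\sum_j l_j^2}{l_i^2},\qquad \sum_i\frac{1}{p_i}=1,
\]
rather than $p_i=n$. With this choice Theorem~\ref{thm:DK} gives
\[
\Bigl\|\prod_i D_{l_i}\Bigr\|_1^2\le\prod_i\Bigl(\frac{2}{p_i(l_i^2-1)}\Bigr)^{1/p_i}
=\frac{2}{\sum_j l_j^2}\prod_i\Bigl(\frac{l_i^2}{l_i^2-1}\Bigr)^{l_i^2/\sum_j l_j^2}
\le\frac{2}{\sum_j l_j^2}\cdot\frac{l_{\min}^2}{l_{\min}^2-1},
\]
which is exactly the bound needed for~\eqref{inq:maingelhit}; no AM--GM step is required at all. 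Second, Theorem~\ref{thm:DK} needs $p_i\ge2$, i.e.\ $l_i^2\le\tfrac12\sum_j l_j^2$ for every $i$. This fails precisely when $l_{\max}^2>\tfrac12\sum_j l_j^2$, and the paper handles that regime by the trivial monotonicity bound
\[
N_\infty\Bigl(\sum_i U_i\Bigr)\ge N_\infty(U_{\max})=l_{\max}^2>\frac12\sum_i l_i^2.
\]
This two-case split (dominant summand versus balanced summands) is the missing structural ingredient; your attempts to salvage equal exponents by pointwise or $L^\infty$ bounds are not needed and would not give the clean $l_{\min}$-only constant.
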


\begin{proof}
By Theorem \ref{cor:RgzItg}, we have $M(X_1+\cdots+X_n)\le M(U_1+\cdots+U_n)$, where $U_i$'s are independent integer valued random variables uniformly supported on $\{1,2,\cdots, l_i\}$. Let $g_i$ be the probability mass functions of $U_i$, thus by Hausdorff-Young inequality for discrete groups, we have
\begin{align*}
 M(U_1+\cdots+U_n)=M(g_i*\cdots * g_n)\le \left\|\prod_{i}D_{l_i}\right\|_1
\end{align*}
Now we have two cases:
\begin{enumerate}
\item Case 1: $\frac{l_{max}^2}{\sum_j l_j^2}\le 1/2$
\item Case 2: $\frac{l_{max}^2}{\sum_j l_j^2}> 1/2$
\end{enumerate}

For Case 1, let $p_i:=\frac{\sum_j l_j^2}{l_i^2}$, then clearly $p_i\ge 2$ and $\sum_i \frac{1}{p_i}=1$, then by H\"{o}lder's inequality and Theorem \ref{thm:DK}, 
\begin{align*}
\left\|\prod_{i}D_{l_i}\right\|_1^2
\le & \prod_{i=1}^n\|D_{l_i}\|_{p_i}^2\\
\le &\prod_{i=1}^n \left(\frac{2}{p_i(l_i^2-1)}\right)^{\frac{1}{p_i}}\\
\le &\frac{2l_{min}^2}{l_{min}^2-1}\cdot\frac{1}{\sum_{i=1}^nl_i^2}
\end{align*}
which is exactly \eqref{inq:maingelhit} for this case. Furthermore, we have
\begin{align*}
N_\infty\left(\sum_{i=1}^nX_i\right)
\ge &\frac{1}{2}\cdot \frac{l_{min}^2-1}{l_{min}^2} \sum_{i=1}^n N_\infty(U_i)\\
\ge &\frac{1}{2}\cdot \frac{l_{min}^2-1}{l_{min}^2} \sum_{i=1}^n \frac{l_i^2}{(l_i+1)^2}N_\infty(X_i)\\
\ge &\frac{1}{2}\cdot \frac{l_{min}-1}{l_{min}+1} \sum_{i=1}^nN_\infty(X_i)
\end{align*}
which provides inequality \eqref{inq:maingel} for this case. 

For Case 2, by the fact that $N_\infty\left(\sum_{i=1}^nU_i\right)\ge N_\infty (U_j)$ for each $j$, 
\ben
N_\infty\left(\sum_{i=1}^nU_i\right)\ge l_{max}^2>\frac{1}{2}\sum_{i=1}^n N_\infty(U_i)
\een
which provides inequality \eqref{inq:maingelhit} for this case. Furthermore, 
\begin{align*}
N_\infty\left(\sum_{i=1}^nX_i\right)
\ge & N_\infty\left(\sum_{i=1}^nU_i\right)>\frac{1}{2}\sum_{i=1}^n N_\infty(U_i)\ge \frac{1}{2}\sum_{i=1}^n \frac{l_i^2}{(l_i+1)^2}N_\infty(X_i)\\
\ge &\frac{1}{2}\cdot \frac{l_{min}^2}{(l_{min}+1)^2}\sum_{i=1}^nN_\infty(X_i)\ge \frac{1}{2}\cdot \frac{l_{min}-1}{l_{min}+1}\sum_{i=1}^nN_\infty(X_i)
\end{align*}
which provides inequality \eqref{inq:maingel} for this case. 
\end{proof}

\begin{rmk}
In Corollary \ref{thm:EPIdiscrGer}, it is easy to see that as $l_{min}\rightarrow\infty$, the constant is asymptotically $\frac{1}{2}$, which is asymptotically sharp in the sense that $N_\infty(X+X')=N(X)$ for the case that $X$ is uniformly distributed on a discrete interval and $X'$ is the independent copy of $X$.
\end{rmk}

\bibliographystyle{elsarticle-num}


\end{document}